\begin{document}
\title[\hfilneg 2019\hfil Renormalized solution to the Riccati type equation]
{An application of global gradient estimates in Lorentz-Morrey spaces: The existence of
stationary solution to degenerate diffusive Hamilton-Jacobi equations}

\author[M.-P. Tran, T.-N. Nguyen \hfil EJDE-2018\hfilneg]
{Minh-Phuong Tran, Thanh-Nhan Nguyen}

\address{Minh-Phuong Tran (corresponding author) \newline
Applied Analysis Research Group, Faculty of Mathematics and Statistics, Ton Duc Thang University, Ho Chi Minh city, Vietnam}
\email{tranminhphuong@tdtu.edu.vn}

\address{Thanh-Nhan Nguyen \newline
Department of Mathematics, Ho Chi Minh City University of Education, Ho Chi Minh city, Vietnam}
\email{nguyenthnhan@hcmup.edu.vn}


\subjclass[2010]{35K55, 35K67, 35K65}
\keywords{Degenerate diffusive Hamilton-Jacobi equations; stationary solution; quasilinear Riccati type equation; Lorentz-Morrey space; uniformly thickness.}

\begin{abstract}

In historical mathematics and physics, the Kardar-Parisi-Zhang equation or a quasilinear stationary version of a time-dependent viscous Hamilton-Jacobi equation in growing interface and universality classes, is also known by the different name as the quasilinear Riccati type equation. The existence of solutions to this type of equation under some assumptions and requirements, still remains an interesting open problem at the moment. In our previous studies \cite{MP2018, MPT2019}, we obtained the global bounds and gradient estimates for quasilinear elliptic equations with measure data. There have been many applications are discussed related to these works, and main goal of this paper is to obtain the existence of a renormalized solution to the quasilinear stationary solution to the degenerate diffusive Hamilton-Jacobi equation with the finite measure data in Lorentz-Morrey spaces. 
\end{abstract}

\maketitle
\newtheorem{theorem}{Theorem}[section]
\newtheorem{corollary}[theorem]{Corollary}
\newtheorem{definition}[theorem]{Definition}
\newtheorem{example}[theorem]{Example}
\newtheorem{lemma}[theorem]{Lemma}
\newtheorem{proposition}[theorem]{Proposition}
\newtheorem{remark}[theorem]{Remark}


\section{Introduction}
\label{sec:intro}

This paper is devoted to consider the existence of renormalized solution of the following stationary degenerate diffusive Hamilton-Jacobi equation, with respect to a given measure data $\mu$, that has the form:
\begin{equation}
\label{eq:Riccati}
\begin{cases}
-\mbox{div}(A(x,\nabla u)) &= \ |\nabla u|^q + \mu \quad \text{in} \ \ \Omega, \\
\hspace{1.8cm} u &= \ 0 \quad \text{on} \ \ \partial \Omega,
\end{cases}
\end{equation}
in Lorentz-Morrey spaces $L^{s,t;\kappa}(\Omega)$ (the optimal range of $s,t$ and $\kappa$ will be clarified in our proof later). It is noticeable that our domain $\Omega \subset \mathbb{R}^n$ ($n \ge 2$) is a bounded domain whose complement satisfies a $p$-capacity uniform thickness condition. Specifically and precisely, in the present work, we consider for extended case, in which $p \in \left(\frac{3n-2}{2n-1},n\right)$. Moreover, in our problem, the nonlinearity  $A: \ \Omega \times \mathbb{R}^n \to \mathbb{R}^n$ is a Carath{\'e}dory vector valued function which satisfies growth and monotonicity conditions, i.e., there exist positive constants $c_1, c_2$ such that for some $p >1$ there holds
\begin{align*}
\left| A(x,\xi) \right| &\le c_1 |\xi|^{p-1},\\
\langle A(x,\xi_1)-A(x,\xi_2), \xi_1 - \xi_2 \rangle &\ge c_2 \left( |\xi_1|^2 + |\xi_2|^2 \right)^{\frac{p-2}{2}}|\xi_1 - \xi_2|^2,
\end{align*}
for every $\xi,\, \xi_1,\, \xi_2 \in \mathbb{R}^n \setminus \{0\}$ and $x \in \Omega$ almost everywhere. 

This type of equation often appears in physical theory of surface growth, also known as the Kardar-Parisi-Zhang (KPZ) equation, where the study of this equation is still a challenge for mathematicians. It can be viewed as a quasilinear stationary version of a time-dependent viscous Hamilton-Jacobi equation, and it would be applied much in growing interface and universality classes (see \cite{KPZ, KS}). Specifically, for the case of $A(x,\xi) = |\xi|^{p-2}\xi$, the considered equation~\eqref{eq:Riccati} is a type of standard $p$-Laplace equation
$$
-\Delta_p u  = \ |\nabla u|^q + \mu,
$$
and this equation has been studied extensively by several authors  with  their fine papers \cite{BGV,HM1999,Martio}, in both historical view of mathematics and physics. Since then, for the general nonlinearity $A$, much attention has been devoted to the existence of solution also some comparison estimates, regularity theories of the problem. There have been several studies to the existence of solution to~\eqref{eq:Riccati} under different assumptions, and later extended to several spaces. More precisely, it was mentioned in \cite[page 13-14]{BGV} about the sharp existence for the $p$-Laplacian problem in supercritical case. And later, in many works of O. Martio~\cite{Martio2}, Mengesha {\it et al.}~\cite{MePh}, N.C.~Phuc {\it et al.} (see \cite{ MePh, 55Ph1,  55Ph2}) and M.-P.~Tran {\it et al.} (see~\cite{PN2019}), it is also related to the existence of renormalized solution to \eqref{eq:Riccati} under different hypotheses of domain $\Omega$, the nonlinearity operator $A$ and the functional spaces. Motivated by these works, we are interested in the solvability to equation~\eqref{eq:Riccati} in Lorentz-Morrey spaces for the supercritical case $q \in \left( \frac{n(p-1)}{n-1}, p \right)$ under the $p$-capacity uniform thickness condition of the domain $\Omega$. 

There are several tools developed for linear and/or nonlinear potential and Calder\'on-Zygmund theories in recent years (see \cite{BW1, 11DMOP, 55DuzaMing, Duzamin2, 55MePh2, Mi2, 55QH2, 55Ph0, 55Ph2}). It is worth pointing out that in our study, the key ingredients were based on some local comparison estimates of renormalized solution to the following quasilinear elliptic equation:
\begin{equation}
\label{eq:elliptictype}
\begin{cases}
-\mbox{div}(A(x,\nabla u))&= \ \mu \quad \text{in} \ \ \Omega, \\
\hspace{2cm} u &=\ 0 \quad \text{on} \ \ \partial \Omega.
\end{cases}
\end{equation} 

Earlier, there were a series of works by G. Mingione {\it et al.} (in~\cite{55DuzaMing},~\cite{Duzamin2},~\cite{KMi1}, \cite{KMi2},\cite{Mi2}~\cite{Mi3}), N.~C.~Phuc {\it et al.} (in \cite{AdP1, 55Ph0,  55Ph1, 55Ph2}), Q.~H.~Nguyen {\it et al.} (in \cite{55QH2, 55QH3, 55QH4, 55QH5} and references therein), M.-P. Tran {\it et al.} (in~\cite{MP2018, MPT2019}), in which authors gave a local and global gradient estimates in Lorentz or Morrey-Lorentz spaces under various assumptions on $\Omega$. 

In the advantage of using the hypothesis of $p$-capacity uniform thickness condition in~\cite{55Ph1}, the gradient estimate of renormalized solution to~\eqref{eq:elliptictype} were known for the regular case of $ p \in \left( 2 - \frac{1}{n}, n\right)$. And in our previous work~\cite{MPT2019}, we established the Lorentz-Morrey global bound for quasilinear elliptic equation~\eqref{eq:elliptictype} in singular case of $p \in \left(\frac{3n-2}{2n-1}, 2-\frac{1}{n}\right]$. The Morrey global bound for equation~\eqref{eq:elliptictype} in the singular case is also studied in~\cite{55QH4} under hypotheses of Reifenberg domain $\Omega$ and smallness BMO of operator $A$. In the present paper, as an application of global gradient estimates studied in~\cite{MPT2019}, we will make a discussion of the solvability of equation~\eqref{eq:Riccati} in Lorentz-Morrey spaces for singular cases with only the hypothesis of $p$-capacity uniform thickness condition. However, we connect the estimates in~\cite{55Ph1} and~\cite{MPT2019} to obtain a complete existence result for both regular and singular cases, that is the reason why we generalize our result for $p \in \left(\frac{3n-2}{2n-1},n\right)$. 

We first recall the Lorentz-Morrey global bounds of renormalized solution to equation \eqref{eq:elliptictype}, that was proved in~\cite{55Ph1} and~\cite{MPT2019}. The following theorem is obtained by combining the gradient estimate results for the regular case in~\cite[Theorem 1.1]{55Ph1} and the singular case in~\cite[Theorem 1.1]{MPT2019}. We notice that the quasi-norm $\| \cdot \|_{L^{s,t; \,\kappa}(\Omega)}$ in Lorentz-Morrey space $L^{s,t; \,\kappa}(\Omega)$ will be presented in the next section.

\begin{theorem}
\label{coro:P}
 Let $n \ge 2$,  $ p \in \left(\frac{3n-2}{2n-1}, n\right)$ and $\Omega \subset \mathbb{R}^n$ be a bounded domain whose complement satisfies a $p$-capacity uniform thickness condition. Assume that $\mu \in L^{\frac{s(\theta-1)}{\theta(p-1)}, \frac{t(\theta-1)}{\theta(p-1)};\frac{s(\theta-1)}{p-1}}(\Omega)$ for some $s\in (0,p]$, $t \in (0,\infty]$ and $\theta \in [p, n]$. Then for any renormalized solution $u$ to equation~\eqref{eq:elliptictype}, there exists a positive constant $C$ such that
\begin{align}
\label{eq:apply}
&\|\nabla u\|_{L^{s,t;\frac{s(\theta-1)}{p-1}}(\Omega)} \leq C \|\mu\|^{\frac{1}{p-1}}_{L^{\frac{s(\theta-1)}{\theta(p-1)}, \frac{t(\theta-1)}{\theta(p-1)};\frac{s(\theta-1)}{p-1}}(\Omega)}.
\end{align}
\end{theorem}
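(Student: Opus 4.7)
The statement of Theorem \ref{coro:P} is presented explicitly as a synthesis of two previously established Lorentz--Morrey gradient estimates, so my plan is not to reprove the estimate from scratch but to verify that the hypotheses on $p$, $s$, $t$, $\theta$, and on the measure $\mu$ match those of the two source results and then to paste the conclusions together. Concretely, I would split the range $p \in \left(\frac{3n-2}{2n-1},n\right)$ at the threshold $p_0 := 2-\frac{1}{n}$ and treat $p \in [p_0, n)$ (the regular range) and $p \in \left(\frac{3n-2}{2n-1}, p_0\right)$ (the singular range) separately.

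For $p \in [p_0, n)$, I would appeal to \cite[Theorem 1.1]{55Ph1}, which gives precisely a bound of the form $\|\nabla u\|_{L^{s,t;\kappa}(\Omega)} \leq C\|\mu\|^{\frac{1}{p-1}}_{L^{s',t';\kappa}(\Omega)}$ for renormalized solutions of \eqref{eq:elliptictype} on domains whose complement is $p$-capacity uniformly thick. I would check that the indices in our statement, namely $\kappa = \frac{s(\theta-1)}{p-1}$, first exponent $\frac{s(\theta-1)}{\theta(p-1)}$ and second exponent $\frac{t(\theta-1)}{\theta(p-1)}$, are exactly the admissible exponents produced by that theorem once the Morrey scaling parameter $\theta \in [p,n]$ is fixed. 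For $p$ in the singular range, I would apply \cite[Theorem 1.1]{MPT2019} in exactly the same fashion; this is the authors' own earlier result and was formulated precisely to handle the range $\left(\frac{3n-2}{2n-1}, 2-\frac{1}{n}\right]$ under the same $p$-capacity uniform thickness condition. The overlap at $p = p_0$ is harmless since either theorem can be invoked.

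The final step is to take $C := \max\{C_{\text{reg}},C_{\text{sing}}\}$, where the two constants come from the respective references and depend only on the structural parameters $n$, $p$, $c_1$, $c_2$, $s$, $t$, $\theta$, the diameter of $\Omega$, and the thickness constant of $\mathbb{R}^n \setminus \Omega$; this delivers the single estimate \eqref{eq:apply} valid throughout the full range of $p$.

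The only real obstacle is bookkeeping: one must check that the parametrization of Lorentz--Morrey indices used here (through the single parameter $\theta \in [p,n]$) is compatible with the parametrizations used in \cite{55Ph1} and \cite{MPT2019}, and that the restriction $s \in (0,p]$, $t \in (0,\infty]$ covers precisely the admissible range in both sources. I expect no analytic difficulty beyond this index-matching, since the deep regularity work — pointwise gradient potential bounds, good-$\lambda$ inequalities and Lorentz--Morrey embeddings — is already done in the cited theorems.
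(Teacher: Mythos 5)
Your proposal matches the paper's own treatment exactly: the paper does not reprove the estimate but states that Theorem~\ref{coro:P} is obtained by combining \cite[Theorem 1.1]{55Ph1} for the regular range and \cite[Theorem 1.1]{MPT2019} for the singular range, under the same $p$-capacity uniform thickness hypothesis, which is precisely your splitting at $p_0 = 2-\frac{1}{n}$ with index-matching and a maximum of the two constants. The only cosmetic point is that, as the ranges are described in the paper, the endpoint $p = 2-\frac{1}{n}$ is covered by \cite{MPT2019} (closed at that endpoint) rather than by \cite{55Ph1}, but this does not affect the argument.
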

In this paper, we prove the existence result of a renormalized solution to equation~\eqref{eq:Riccati} in Lorentz-Morrey space for both singular and regular cases $p\in \left( \frac{3n-2}{2n-1}, n\right)$ in the super-critical case $q > \frac{n(p-1)}{n-1}$. Our proof is based on applying Theorem~\ref{coro:P} and the Schauder Fixed Point Theorem in~\cite{Trudinger}. The main idea of this proof comes from the proof of the existence result studied in~\cite{MePh}. More precisely, we consider a closed and convex set $S$ as the form
$$ S = \left\{v \in W_0^{1,1}(\Omega): \ |||\nabla v|^q||_{L^{s,t;\frac{sq(\theta-1)}{p-1}}(\Omega)} \le \varepsilon\right\},$$
where the positive constant $\varepsilon$ is chosen later. We note that the convexity of $S$ will be obtained for $qs>1$. For every $v\in S$, we define by $T(v) = u$ the unique renormalized solution to the following equation 
\begin{equation*}
\begin{cases}
-\mbox{div}(A(x,\nabla u)) &=\ |\nabla v|^q+ \mu \quad \text{in} \ \ \Omega, \\
\hspace{2cm} u &= \ 0 \quad \text{on} \ \ \partial \Omega.
\end{cases}
\end{equation*}
We refer to~\cite{11DMOP} for the uniqueness of renormalized solution to above equation.
By Theorem~\ref{coro:P}, we can prove that the mapping $T: \ S \to S$ is well-defined, continuous and $T(S)$ is precompact under the strong topology of $W_0^{1,1}(\Omega)$. The existence result can be obtained by the Schauder Fixed Point Theorem. Let us state our main result in the following theorem.
\begin{theorem}
\label{theo:main}
Let  $n \ge 2$, $ p\in \left( \frac{3n-2}{2n-1}, n\right)$ and $\Omega \subset \mathbb{R}^n$ be a bounded domain whose complement satisfies a $p$-capacity uniform thickness condition. Assume that
\begin{equation}
\label{eq:cond_q}
\max\left\{\frac{n(p-1)}{n-1}, p-1 + \frac{1}{n}\right\}  < q < p.
\end{equation}
 For any $q \le t \le \infty$  and 
 \begin{equation}
 \label{eq:cond_s}
 \max\left\{1,\frac{1}{q}\right\} <s \le \min\left\{\frac{p}{q},\frac{n}{\theta}\right\},
 \end{equation}
with $\theta = \frac{q}{q-p+1}$. There exists $\delta_0>0$ such that if $\|\mu\|_{L^{s,t;\,\theta s}(\Omega)}\leq \delta_0$ then the equation \eqref{eq:Riccati} admits a renormalized solution $u$ satisfying 
\begin{align}
\label{eq:est}
\|\nabla u\|^q_{L^{qs,qt;\,\theta s}(\Omega)} \le \theta\delta_0 - \|\mu\|_{L^{s,t;\,\theta s}(\Omega)}.
\end{align}
\end{theorem}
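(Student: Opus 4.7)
Following the strategy outlined in the introduction, the plan is to apply the Schauder Fixed Point Theorem to a closed convex set $S$ and a mapping $T:S\to S$. First, set
\[
\varepsilon:=\theta\delta_0-\|\mu\|_{L^{s,t;\,\theta s}(\Omega)},
\]
strictly positive because $\theta>1$ and $\|\mu\|_{L^{s,t;\,\theta s}(\Omega)}\le\delta_0$, and define
\[
S:=\Bigl\{v\in W_0^{1,1}(\Omega):\ \bigl\||\nabla v|^q\bigr\|_{L^{s,t;\,\theta s}(\Omega)}\le\varepsilon\Bigr\}.
\]
Using the standard scaling identity $\||f|^q\|_{L^{s,t;\,\theta s}}=\|f\|_{L^{qs,qt;\,\theta s}}^q$, I would rewrite $S$ as the closed ball of radius $\varepsilon^{1/q}$ in the gradient (quasi-)norm $\|\nabla\cdot\|_{L^{qs,qt;\,\theta s}(\Omega)}$. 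Because \eqref{eq:cond_s} forces $qs>1$, this (quasi-)norm is equivalent to a genuine norm, so $S$ is convex; it is closed in $W_0^{1,1}(\Omega)$ by Fatou-type semicontinuity and visibly contains $0$. For $v\in S$, let $T(v):=u$ denote the unique renormalized solution of \eqref{eq:elliptictype} with datum $|\nabla v|^q+\mu$.

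The core computation is $T(S)\subseteq S$. First one checks that $\theta=q/(q-p+1)$ lies in $[p,n]$: the inequality $q<p$ forces $\theta>p$, and $q>n(p-1)/(n-1)$ forces $\theta\le n$. The hypothesis \eqref{eq:cond_s} then places $(qs,qt)$ in $(0,p]\times(0,\infty]$, so Theorem~\ref{coro:P} applies and gives
\[
\|\nabla T(v)\|_{L^{qs,qt;\,\theta s}(\Omega)}\le C\bigl\||\nabla v|^q+\mu\bigr\|_{L^{s,t;\,\theta s}(\Omega)}^{1/(p-1)}.
\]
For $v\in S$ the triangle inequality bounds the right-hand side by $C(\theta\delta_0)^{1/(p-1)}$, hence, raising to the $q$-th power and using the scaling identity,
\[
\bigl\||\nabla T(v)|^q\bigr\|_{L^{s,t;\,\theta s}(\Omega)}\le C^q(\theta\delta_0)^{q/(p-1)}.
\]
To enforce this $\le\varepsilon=\theta\delta_0-\|\mu\|_{L^{s,t;\,\theta s}}$ uniformly in $\|\mu\|\le\delta_0$, it suffices to require $C^q\theta^{q/(p-1)}\delta_0^{q/(p-1)-1}\le\theta-1$. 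Since $q>p-1+1/n>p-1$ makes $q/(p-1)-1>0$, this is a genuine smallness condition that pins down the admissible $\delta_0$.

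Finally I would verify that $T$ is continuous on $S$ in the $W_0^{1,1}$-topology and that $T(S)$ is relatively compact there; Schauder's theorem then furnishes a fixed point $u=T(u)$, which is a renormalized solution of \eqref{eq:Riccati}, and $u\in S$ is exactly \eqref{eq:est}. For continuity, if $v_n\to v$ in $W_0^{1,1}$ with $v_n\in S$, I would extract a subsequence with $\nabla v_n\to\nabla v$ a.e., exploit the uniform Lorentz--Morrey control on $|\nabla v_n|^q$ (which sits in $L^r(\Omega)$ for some $r>1$ thanks to $qs>1$) to obtain equi-integrability and hence $|\nabla v_n|^q\to|\nabla v|^q$ strongly in $L^1(\Omega)$, and then invoke the Dal Maso--Murat--Orsina--Prignet stability theorem to get a.e.\ convergence of $\nabla T(v_n)$; the uniform $L^{qs,qt;\,\theta s}$ bound from Theorem~\ref{coro:P} provides the equi-integrability needed to upgrade this to strong $W_0^{1,1}$-convergence. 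Relative compactness of $T(S)$ follows from the same mechanism applied to an arbitrary sequence in $T(S)$. I expect this stability/compactness step to be the principal obstacle, since the passage between Lorentz--Morrey gradient bounds and a.e.\ (and then $L^1$) convergence of gradients for renormalized solutions with merely $L^1$-convergent right-hand sides requires careful handling; by contrast, the verification of $T(S)\subseteq S$ is essentially bookkeeping once the scaling identity and Theorem~\ref{coro:P} are in place.
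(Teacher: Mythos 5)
Your overall strategy coincides with the paper's: a ball in the Lorentz--Morrey gradient norm, invariance of the ball via Theorem~\ref{coro:P} together with smallness of $\delta_0$, continuity and precompactness of $T$ via a.e.\ convergence of gradients, Vitali's theorem and the Dal Maso--Murat--Orsina--Prignet stability result, and finally Schauder. Your way of forcing $T(S)\subseteq S$ (imposing $C^q\theta^{q/(p-1)}\delta_0^{q/(p-1)-1}\le\theta-1$ directly) is a harmless variant of the paper's argument, which instead exhibits a root $y_0$ of the scalar equation $c(y+a)=y^{(p-1)/q}$, takes the radius $\varepsilon_0=y_0^{1/q}$, and chooses $\delta_0$ explicitly; both routes deliver \eqref{eq:est}.

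The genuine gap is the convexity of $S$. You define $S$ through the quasi-norm $\|\cdot\|_{L^{s,t;\,\theta s}}$ (equivalently, the ball of radius $\varepsilon^{1/q}$ for $\|\nabla\cdot\|_{L^{qs,qt;\,\theta s}}$) and assert it is convex ``because the quasi-norm is equivalent to a genuine norm''. That inference fails: equivalence to a norm does not make the sublevel sets of a homogeneous quasi-norm convex --- if the ball $\{\|f\|\le 1\}$ were convex, its Minkowski gauge, which by homogeneity is $\|\cdot\|$ itself, would be subadditive, i.e.\ the quasi-norm would already be a norm; and the $f^*$-based Lorentz functional is not subadditive in the range relevant here (the second index $qt$ exceeds the first index $qs$ in general, e.g.\ $t=\infty$ is allowed). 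This is exactly why the paper introduces the maximal-function norm $|||\cdot|||_{L^{s,t;\,\kappa}}$ built from $f^{**}$ (Lemma~\ref{lem:equi}, Corollary~\ref{coro:equi}) and defines the set $V_\varepsilon$ with that norm, making convexity immediate; the price is that the equivalence constants $\frac{s}{s-1}$ and $\left(\frac{qs}{qs-1}\right)^{p-1}$ must be carried through the invariance estimate, and the same constants are needed where you invoke ``the triangle inequality'' for $\||\nabla v|^q+\mu\|_{L^{s,t;\,\theta s}}$, which for the quasi-norm holds only up to such a factor. Once you redefine $S$ via $|||\cdot|||$ and absorb these constants into your smallness condition on $\delta_0$, the rest of your argument (the scaling identity $\||f|^q\|_{L^{s,t;\,\theta s}}=\|f\|^q_{L^{qs,qt;\,\theta s}}$, the checks $\theta\in(p,n)$ and $qs\le p$ needed for Theorem~\ref{coro:P}, and the continuity/compactness step) goes through essentially as in the paper.
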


The rest of the paper is organized as follows. In the next section, we recall the definitions of Lorentz and Lorentz-Morrey spaces.  Moreover, we introduce a norm which is equivalent to the quasi-norm in Lorentz-Morrey spaces. The proof of Theorem~\ref{theo:main} is given in the last section.

\section{Lorentz-Morrey spaces}
\label{sec:pre}
 In this section, we give some backgrounds about the definitions of Lorentz and Lorentz-Morrey spaces equipped to an usual quasi-norm in general. The nice feature is that this quasi-norm is equivalent to a norm in these functional spaces (see~\cite{55Gra}). In this paper, we give a simple proof for the equivalence between two norms which is useful for our proof in the next section. We assume that $\Omega$ is an open bounded subset of $\mathbb{R}^n$ with $n \ge 2$. For convenience of the reader, we first recall the definition of renormalized solution which details can be found in several papers such as \cite{bebo}, \cite{11DMOP} or \cite{MP2018}.
\subsection{Renormalized solution}
 For each integer $k>0$, and for $s \in \mathbb{R}$ we firstly define the operator $T_k: \mathbb{R} \to \mathbb{R}$ as:
\begin{align}
\label{eq:Tk}
T_k (s) = \max\left\{ -k,\min\{k,s\} \right\},
\end{align}
and this belongs to $W_0^{1,p}(\Omega)$ for every $k>0$, which satisfies
\begin{align*}
-\mbox{div} A(x,\nabla T_k(u)) = \mu_k
\end{align*}
in the sense of distribution in $\Omega$ for a finite measure $\mu_k$ in $\Omega$. 
\begin{definition}
\label{def:truncature}
Let $u$ be a measurable function defined on $\Omega$ which is finite almost everywhere, and satisfies $T_k(u) \in W^{1,1}_0(\Omega)$ for every $k>0$. Then, there exists a unique measurable function $v: \ \Omega \to \mathbb{R}^n$ such that
\begin{align}
\nabla T_k(u) = \chi_{\{|u| \le k\}} v , \quad \text{almost everywhere in} \ \ \Omega, \ \text{for  every} \ k>0.
\end{align}
Moreover, the function $v$ is so-called ``distributional gradient $\nabla u$'' of $u$.
\end{definition}

We define $\mathfrak{M}_b(\Omega)$ as the space of all Radon measures on $\Omega$ with bounded total variation. The positive part, the negative part and total variation of a measure $\mu$ in $\mathfrak{M}_b(\Omega)$ are denoted by $\mu^+, \mu^-$ and $|\mu|$ - is a bounded positive measure on $\Omega$, respectively. For every measure $\mu$ in $\mathfrak{M}_b(\Omega)$ can be written in a unique way as $\mu = \mu_0+\mu_s$, where $\mu_0$ in $\mathfrak{M}_0(\Omega)$ and $\mu_s$ in $\mathfrak{M}_s(\Omega)$. The following Definition \ref{def:renormsol3} of renormalized solution to equation \eqref{eq:elliptictype} was introduced in \cite{11DMOP}, and we reproduce them herein as.

\begin{definition}
\label{def:renormsol3}
Let $\mu = \mu_0+\mu_s \in \mathfrak{M}_b(\Omega)$, where $\mu_0 \in \mathfrak{M}_0(\Omega)$ and $\mu_s \in \mathfrak{M}_s(\Omega)$. A measurable function $u$ defined in $\Omega$ and finite almost everywhere is called a renormalized solution of \eqref{eq:elliptictype} if $T_k(u) \in W^{1,p}_0(\Omega)$ for any $k>0$, $|{\nabla u}|^{p-1}\in L^r(\Omega)$ for any $0<r<\frac{n}{n-1}$, and $u$ has the following additional property. For any $k>0$ there exist  nonnegative Radon measures $\lambda_k^+, \lambda_k^- \in\mathfrak{M}_0(\Omega)$ concentrated on the sets $u=k$ and $u=-k$, respectively, such that $\mu_k^+\rightarrow\mu_s^+$, $\mu_k^-\rightarrow\mu_s^-$ in the narrow topology of measures and  that
 \begin{align*}
 \int_{\{|u|<k\}}\langle A(x,\nabla u),\nabla \varphi\rangle
  	dx=\int_{\{|u|<k\}}{\varphi d}{\mu_{0}}+\int_{\Omega}\varphi d\lambda_{k}%
  	^{+}-\int_{\Omega}\varphi d\lambda_{k}^{-},
 \end{align*}
  	for every $\varphi\in W^{1,p}_0(\Omega)\cap L^{\infty}(\Omega)$.
\end{definition}
 
\subsection{Lorentz spaces}
\label{sec:othersdefs}

For some $s \in (0,\infty)$ and $ t \in (0,\infty]$, the Lorentz space $L^{s,t}(\Omega)$ is defined as the set of all Lebesgue measurable functions $f$ on $\Omega$ such that:
\begin{align}
\label{eq:lorentz}
\|f\|_{L^{s,t}(\Omega)} := \left[ s \int_0^\infty{ \lambda^s \left|  \{x \in \Omega: \, |f(x)|>\lambda\} \right|^{\frac{t}{s}} \frac{d\lambda}{\lambda}} \right]^{\frac{1}{t}} < \infty,
\end{align}
as $t \neq \infty$ and 
\begin{align*}
\|f\|_{L^{s,\infty}(\Omega)} := \sup_{\lambda>0}{\lambda \left|\{x \in \Omega: \, |f(x)|>\lambda\}\right|^{\frac{1}{s}}} < \infty,
\end{align*}
where $|\mathcal{O}|$ denotes the $n$-dimensional Lebesgue measure of a set $\mathcal{O} \subset \mathbb{R}^n$. The space $L^{s,\infty}(\Omega)$ is known as the usual weak $L^s(\Omega)$ or Marcinkiewicz space. 

It is well known that for $t=s$, the Lorentz space $L^{s,s}(\Omega)$ in \eqref{eq:lorentz} is exactly the Lebesgue space $L^s(\Omega)$. Moreover, we have $L^s(\Omega) \subset L^{s,\infty}(\Omega) \subset L^r(\Omega)$, for some $1<r<s<\infty.$

In fact, the quasi-norm $\|\cdot\|_{L^{s,t}(\Omega)}$ may be defined as the other form which is given by Lemma~\ref{lem:f*} below.  For a measure function $f$ in $\Omega$, the distribution function $d_f: \ [0,\infty) \to [0,\infty)$ of $f$ is defined by 
$$ d_f(\lambda) = |\{x\in \Omega: \ |f(x)|>\lambda\}|.$$
The decreasing rearrangement $f^*: \ [0,\infty) \to [0,\infty)$  of $f$ defines as follows
$$ f^*(\lambda) = \inf\{\eta>0: \ d_f(\eta) \le \lambda\}.$$

\begin{lemma}
\label{lem:f*}
Let $s \in (0,\infty)$ and $ t \in (0, \infty]$. For some $f \in L^{s,t}(\Omega)$, there holds
\begin{equation}
\label{eq:normL}
\|f\|_{L^{s,t}(\Omega)} = \begin{cases} \displaystyle \left[\int_0^{\infty} \left(\lambda^{\frac{1}{s}} f^*(\lambda) \right)^t \frac{d\lambda}{\lambda} \right]^{\frac{1}{t}}, & \quad t < \infty, \\ \displaystyle \sup_{\lambda>0} \lambda^{\frac{1}{s}} f^*(\lambda), & \quad t = \infty. \end{cases}
\end{equation}
\end{lemma}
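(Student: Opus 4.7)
The plan is to derive both identities in \eqref{eq:normL} from a single structural fact about the decreasing rearrangement, namely that $f^*$ and $|f|$ are equimeasurable: $d_{f^*}(\eta)=d_f(\eta)$ for every $\eta\ge 0$. This is an immediate consequence of the defining formulas $d_f(\lambda)=|\{|f|>\lambda\}|$ and $f^*(\lambda)=\inf\{\eta>0:d_f(\eta)\le\lambda\}$ together with the right-continuity of $d_f$, and it is essentially the only nontrivial property of $f^*$ the argument will use. Once it is in hand, each of the two cases in \eqref{eq:normL} reduces to a one-dimensional change-of-variables/Fubini manipulation.

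For the main case $t<\infty$, I would start from the right-hand side of \eqref{eq:normL}, rewrite it as $\int_0^\infty \lambda^{t/s-1}f^*(\lambda)^t\,d\lambda$, and apply the layer-cake identity $f^*(\lambda)^t = t\int_0^{f^*(\lambda)}\eta^{t-1}\,d\eta$. An application of Fubini--Tonelli then exchanges the order of integration, and by monotonicity and right-continuity of $f^*$ the inner domain $\{\lambda:f^*(\lambda)>\eta\}$ coincides, up to a null set, with $[0,d_{f^*}(\eta))=[0,d_f(\eta))$. Evaluating the inner integral of $\lambda^{t/s-1}$ against the characteristic function of this interval produces a factor proportional to $d_f(\eta)^{t/s}$, and after collecting constants I should recover exactly the distribution-function expression from \eqref{eq:lorentz}.

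For $t=\infty$ the argument is more elementary: by the definition of $f^*$ one has $f^*(\lambda)\le\eta$ whenever $d_f(\eta)\le\lambda$, so choosing $\lambda=d_f(\eta)$ gives $\lambda^{1/s}f^*(\lambda)\le \eta\,d_f(\eta)^{1/s}$, and taking a supremum in $\eta$ yields $\sup_\lambda \lambda^{1/s}f^*(\lambda)\le\sup_\eta \eta\,d_f(\eta)^{1/s}$; the reverse inequality follows symmetrically from equimeasurability applied to the pair $(f^*,d_f)$. The step I expect to be the main bookkeeping obstacle is handling the jump points of $d_f$ and $f^*$ when identifying $\{f^*>\eta\}$ with $[0,d_f(\eta))$: at such points one must work with the correct one-sided continuous representatives to ensure the set identity holds up to a null set, so that the Fubini computation produces an equality in \eqref{eq:normL} rather than a mere equivalence of quasi-norms.
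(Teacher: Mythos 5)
Your proposal is correct in substance, but it is worth noting that the paper does not actually prove this lemma: its ``proof'' is a one-line citation to \cite[Proposition 1.4.9]{55Gra}, so what you have written is precisely the argument the paper omits, and it is essentially the standard computation given in that reference. For $t<\infty$ your layer-cake/Fubini scheme works and the bookkeeping closes exactly: the key point is the equivalence $f^*(\lambda)>\eta \iff \lambda<d_f(\eta)$ (the forward implication is immediate from the infimum definition, the reverse uses the right-continuity of $d_f$), which makes the identification $\{\lambda:\,f^*(\lambda)>\eta\}=[0,d_f(\eta))$ an exact set identity rather than one ``up to a null set'', so the jump-point worry you flag dissolves; the inner integral then gives $\frac{s}{t}d_f(\eta)^{t/s}$ and you arrive at $\int_0^\infty\lambda^{t/s-1}f^*(\lambda)^t\,d\lambda = s\int_0^\infty\eta^{t-1}d_f(\eta)^{t/s}\,d\eta$, which is the intended meaning of \eqref{eq:lorentz} (the display there reads $\lambda^s$ where $\bigl(\lambda^s d_f(\lambda)\bigr)^{t/s}=\lambda^t d_f(\lambda)^{t/s}$ must be meant; as literally printed the two sides of \eqref{eq:normL} agree only when $s=t$). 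The one step you should tighten is the $t=\infty$ case: substituting $\lambda=d_f(\eta)$ only tests $\lambda$ in the range of $d_f$, which can be a small (even finite) set, so it does not by itself bound $\sup_{\lambda>0}\lambda^{1/s}f^*(\lambda)$; instead, for arbitrary $\lambda$ with $f^*(\lambda)>0$ take $\eta<f^*(\lambda)$, use the equivalence above to get $d_f(\eta)>\lambda$, hence $\eta\,d_f(\eta)^{1/s}>\eta\,\lambda^{1/s}$, and let $\eta\uparrow f^*(\lambda)$; the reverse inequality follows in the same way from $\lambda<d_f(\eta)\Rightarrow f^*(\lambda)\ge\eta$. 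With that repair the argument is complete and self-contained, which is arguably a small improvement over the paper's bare citation.
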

\begin{proof}
The proof of this lemma can be found in~\cite[Proposition 1.4.9]{55Gra}.
\end{proof}
\subsection{A norm in Lorentz space}
We define by $f^{**}: \ [0,\infty) \to [0,\infty)$ the maximal functional of $f$ as follows
\begin{align*}
f^{**}(\lambda) =  \frac{1}{\lambda} \int_0^\lambda f^*(\eta) d\eta,  \ \mbox{ for } \lambda >0 \mbox{ and }   f^{**}(0) = f^*(0).
\end{align*}
For some $ s\in (1, \infty)$, $ t \in [1, \infty]$ and for any $f \in L^{s,t}(\Omega)$, let us introduce 
\begin{align}
\label{eq:Lo_norm}
|||f|||_{L^{s,t}(\Omega)}:=  \left[\int_{0}^{\infty} \left( \lambda^{\frac{1}{s}} f^{**}(\lambda)  \right)^t \frac{d\lambda}{\lambda} \right]^{\frac{1}{t}}, 
\end{align}
if $1\le t<\infty$ and
\begin{align}
\label{eq:Lo_norm_infty}
|||f|||_{L^{s,\infty}(\Omega)}:=  \sup_{\lambda>0} \lambda^{\frac{1}{s}} f^{**}(\lambda).
\end{align}

\begin{lemma}
\label{lem:equi}
Let $ s\in (1, \infty)$ and $ t \in [1, \infty]$. The functional $|||\cdot|||_{L^{s,t}(\Omega)}$ defined by~\eqref{eq:Lo_norm}-\eqref{eq:Lo_norm_infty} is a norm in Lorentz space $L^{s,t}(\Omega)$. Moreover, for any $f \in L^{s,t}(\Omega)$ there holds
\begin{equation}
\label{eq:equi}
 \|f\|_{L^{s,t}(\Omega)} \le |||f|||_{L^{s,t}(\Omega)} \le \frac{s}{s-1}  \|f\|_{L^{s,t}(\Omega)}.
\end{equation}
\end{lemma}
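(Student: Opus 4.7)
The lemma packages two separate claims: the two-sided estimate \eqref{eq:equi}, and the fact that the functional $|||\cdot|||_{L^{s,t}(\Omega)}$ genuinely satisfies the triangle inequality. I would handle them in that order.

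For the lower bound in \eqref{eq:equi}, I would note that $f^*$ is non-increasing on $(0,\infty)$, so the running average satisfies $f^{**}(\lambda) \ge f^*(\lambda)$ pointwise; substituting this into \eqref{eq:Lo_norm} or \eqref{eq:Lo_norm_infty} and comparing with \eqref{eq:normL} gives $\|f\|_{L^{s,t}(\Omega)} \le |||f|||_{L^{s,t}(\Omega)}$ at once. For the upper bound in the case $t<\infty$, I would rewrite
\[ |||f|||_{L^{s,t}(\Omega)}^t = \int_0^\infty \left(\int_0^\lambda f^*(\eta)\,d\eta\right)^t \lambda^{t/s - t - 1}\,d\lambda \]
and apply the classical one-dimensional weighted Hardy inequality
\[ \int_0^\infty \left(\int_0^\lambda g(\eta)\,d\eta\right)^t \lambda^{-r-1}\,d\lambda \le \left(\frac{t}{r}\right)^t \int_0^\infty g(\lambda)^t \lambda^{t - r - 1}\,d\lambda \]
with $g = f^*$ and $r = t(s-1)/s$. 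The hypothesis $s>1$ is exactly what makes $r>0$ so that Hardy applies, and the resulting constant $t/r = s/(s-1)$ matches the bound asserted in \eqref{eq:equi}. The case $t=\infty$ is a short direct computation: if $M = \|f\|_{L^{s,\infty}(\Omega)}$, then $f^*(\eta) \le M\eta^{-1/s}$, and integrating over $(0,\lambda)$ yields $\lambda^{1/s} f^{**}(\lambda) \le \tfrac{s}{s-1} M$ as required.

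To conclude that $|||\cdot|||_{L^{s,t}(\Omega)}$ is a norm, positive homogeneity and definiteness are immediate from the defining formulas, so the only substantive point is the triangle inequality. It reduces to the pointwise subadditivity $(f+g)^{**}(\lambda) \le f^{**}(\lambda) + g^{**}(\lambda)$ — this is the key feature that $f^{**}$ enjoys but $f^*$ does not, and it is precisely why one recovers a genuine norm here rather than merely the quasi-norm $\|\cdot\|_{L^{s,t}(\Omega)}$. I would prove subadditivity through the characterization $\lambda f^{**}(\lambda) = \sup\{\int_E |f|\,dx : E \subset \Omega,\; |E| \le \lambda\}$, which turns it into a trivial consequence of $|f+g|\le |f|+|g|$. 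With subadditivity in hand, Minkowski's inequality applied in the weighted Lebesgue space $L^t\!\bigl((0,\infty), \lambda^{t/s-1}d\lambda\bigr)$ (using $t\ge 1$), or a direct supremum estimate when $t=\infty$, yields $|||f+g|||_{L^{s,t}(\Omega)} \le |||f|||_{L^{s,t}(\Omega)} + |||g|||_{L^{s,t}(\Omega)}$.

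The main obstacle is the choice of exponents in the weighted Hardy inequality: pinning the Hardy constant down to exactly $s/(s-1)$ requires the identification $r = t(s-1)/s$; once that is written down, every remaining step is essentially bookkeeping.
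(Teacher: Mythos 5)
Your proposal is correct and follows essentially the same route as the paper: the lower bound from $f^{**}\ge f^*$, the $t=\infty$ bound by the $f^*(\eta)\le M\eta^{-1/s}$ computation, and the norm property via the characterization $\lambda f^{**}(\lambda)=\sup_{|E|\le\lambda}\int_E|f|$ followed by Minkowski are all exactly what the paper does. The one cosmetic difference is in the case $1\le t<\infty$: where you invoke the classical one-dimensional weighted Hardy inequality as a black box (with the exponent bookkeeping $r=t(s-1)/s$ giving the constant $s/(s-1)$), the paper re-derives that very inequality inline via H\"older on the inner integral followed by Fubini --- the same mechanism, merely unpacked rather than cited.
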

\begin{proof} We prove that the functional $|||\cdot|||_{L^{s,t}(\Omega)}$ defined by~\eqref{eq:Lo_norm}-\eqref{eq:Lo_norm_infty} is a norm in Lorentz space $L^{s,t}(\Omega)$. We remark that
$$ f^{**}(\lambda) = \frac{1}{\lambda} \int_0^\lambda f^*(\eta)d\eta = \frac{1}{\lambda} \sup_{|E|=\lambda}\int_E |f(x)|dx.$$
This deduces the subadditivity of the maximal functional, i.e., for any measurable function $f, g$ and for any $\lambda>0$, there holds
\begin{align*}
(f+g)^{**}(\lambda) & = \frac{1}{\lambda} \sup_{|E|=\lambda}\int_E |f(x)+ g(x)|dx\\
 & \le \frac{1}{\lambda} \sup_{|E|=\lambda}\int_E |f(x)|dx + \frac{1}{\lambda} \sup_{|E|=\lambda}\int_E |g(x)|dx \\
 &= f^{**}(\lambda) + g^{**}(\lambda).
\end{align*}
By the above subadditivity and Minkowski's inequality, it follows that the functional $|||\cdot|||_{L^{s,t}(\Omega)}$ is a norm in Lorentz space $L^{s,t}(\Omega)$.

The first inequality of \eqref{eq:equi} is obtained from Lemma~\ref{lem:f*} and the fact that $f^*(\lambda) \le f^{**}(\lambda)$ for every $\lambda >0$. We then prove the second inequality of \eqref{eq:equi}. 

For any $1<t<\infty$,  by Holder's inequality with $\frac{1}{t} + \frac{1}{t'} = 1$, we obtain
\begin{align}
\label{eq:1}
\nonumber
\left(\int_0^{\lambda} f^*(\eta) d\eta \right)^t & = \left(\int_0^{\lambda} f^*(\eta)\eta^{\frac{1}{s}-\frac{1}{ts}} \eta^{-\frac{1}{s}+\frac{1}{ts}}  d\eta\right)^t \\
\nonumber
& \le \left(\int_0^{\lambda} (f^*(\eta))^t\eta^{\frac{t}{s}-\frac{1}{s}} d\eta\right)  \left(\int_0^{\lambda} \eta^{-\frac{t}{s}+\frac{t'}{ts}} d\eta\right)^{\frac{t}{t'}}\\
\nonumber
  & = \left(\int_0^{\lambda} (f^*(\eta))^t\eta^{\frac{t}{s}-\frac{1}{s}}  d\eta\right)  \left(\int_0^{\lambda} \eta^{-\frac{1}{s}} d\eta\right)^{t-1}\\
 & = \left(\frac{1}{1 - 1/s}\right)^{t-1} \lambda^{(t-1)(1-1/s)} \int_0^{\lambda} \left(f^*(\eta)\right)^t \eta^{\frac{t}{s}-\frac{1}{s}} d\eta,
\end{align}
for any $\lambda>0$. It is easy to see that the inequality~\eqref{eq:1} also holds for $t=1$. By integrating both sides of~\eqref{eq:1} from zero to infinity and using Fubini's Theorem we get that
\begin{align*}
|||f|||_{L^{s,t}(\Omega)} & = \left[\int_{0}^{\infty} \lambda^{\frac{t}{s}-t-1} \left( \int_0^\lambda f^*(\eta) d\eta  \right)^t d\lambda \right]^{\frac{1}{t}} \\
& \le \left[ \left(\frac{1}{1 - 1/s}\right)^{t-1} \int_{0}^{\infty} \lambda^{\frac{1}{s}-2}  \int_0^{\lambda} \left(f^*(\eta)\right)^t \eta^{\frac{t}{s}-\frac{1}{s}} d\eta  d\lambda \right]^{\frac{1}{t}} \\
& = \left[ \left(\frac{s}{s-1}\right)^{t-1} \int_{0}^{\infty} \left(f^*(\eta)\right)^t \eta^{\frac{t}{s}-\frac{1}{s}}   \int_{\eta}^{\infty} \lambda^{\frac{1}{s}-2}   d\lambda d\eta  \right]^{\frac{1}{t}} \\
& = \frac{s}{s-1} \|f\|_{L^{s,t}(\Omega)},
\end{align*}
which deduces the second inequality for $t \in [1, \infty)$. In the case of $t = \infty$, we also have

\begin{align*}
|||f|||_{L^{s,\infty}(\Omega)} & = \sup_{\lambda>0} \lambda^{\frac{1}{s}-1} \int_0^{\lambda} \eta^{-\frac{1}{s}} \eta^{\frac{1}{s}} f^{*}(\eta)d\eta \\
& \le \sup_{\lambda>0} \lambda^{\frac{1}{s}-1} \left(\int_0^{\lambda} \eta^{-\frac{1}{s}} d\eta\right)  \|f\|_{L^{s,\infty}(\Omega)} \\
& = \frac{s}{s-1} \|f\|_{L^{s,\infty}(\Omega)}.
\end{align*}
\end{proof}
\subsection{Lorentz-Morrey spaces}
Let $ s \in (0,\infty)$, $t \in (0,\infty]$ and $\kappa \in (0,n]$. The  Lorentz-Morrey functional spaces $L^{s,t;\,\kappa}(\Omega)$ is the set of all functions $g \in L^{s,t}(\Omega)$ such that
\begin{align}
\label{eq:LMsp}
\|f\|_{L^{s,t;\,\kappa}(\Omega)}:=\sup_{0<\rho\le diam(\Omega); \, x \in \Omega}{\rho^{\frac{\kappa-n}{s}}}\|f\|_{L^{s,t}(B_\rho(x)\cap\Omega)} < \infty,
\end{align}
where $B_\rho(x)$ denotes the ball centered $x$ with radius $\rho$ in $\mathbb{R}^n$.

Obviously, in the case of $\kappa = n$ the Lorentz-Morrey space $L^{s,t;\,\kappa}(\Omega)$ is exactly the Lorentz space $L^{s,t}(\Omega)$. It is similar to the Lorentz space, the functional $\|\cdot\|_{L^{s,t;\,\kappa}(\Omega)}$ is just a quasi-norm in general. So we need to define a norm in Lorentz-Morrey space. With this norm, the set $V_{\varepsilon}$ defined by~\eqref{eq:setV} in the next section will be convex.

Let $ s \in (1,  \infty)$, $ t \in [1, \infty]$ and $\kappa \in (0,n]$. For any $f \in L^{s,t;\,\kappa}(\Omega)$, let us set
\begin{align}
\label{eq:normLM}
|||f|||_{L^{s,t;\,\kappa}(\Omega)}:=\sup_{0<\rho\le diam(\Omega); \, x \in \Omega}{\rho^{\frac{\kappa-n}{s}}}|||f|||_{L^{s,t}(B_\rho(x)\cap\Omega)}.
\end{align}

The following corollary is directly obtained by definition~\eqref{eq:normLM} and Lemma~\ref{lem:equi}.
\begin{corollary}
\label{coro:equi}
Let $ s \in (1,  \infty)$, $ t \in [1, \infty]$ and $\kappa \in (0,n]$. The function $|||\cdot|||_{L^{s,t;\,\kappa}(\Omega)}$ defined by~\eqref{eq:normLM} is a norm in Lorentz-Morrey space $L^{s,t;\,\kappa}(\Omega)$. Moreover, for any $f \in L^{s,t;\,\kappa}(\Omega)$, there holds
\begin{equation}
 \|f\|_{L^{s,t;\,\kappa}(\Omega)} \le |||f|||_{L^{s,t;\,\kappa}(\Omega)} \le \frac{s}{s-1}  \|f\|_{L^{s,t;\,\kappa}(\Omega)}.
\end{equation}
\end{corollary}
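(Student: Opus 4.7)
The plan is to lift Lemma~\ref{lem:equi} pointwise in $(\rho, x)$ to the Lorentz-Morrey setting via the supremum in definition~\eqref{eq:normLM}. The key observation is that, for each fixed pair $(\rho, x)$, the functional $f \mapsto \rho^{(\kappa-n)/s} |||f|||_{L^{s,t}(B_\rho(x) \cap \Omega)}$ is a seminorm (by restriction and Lemma~\ref{lem:equi}), and the supremum of a family of seminorms over a common index set is again a seminorm.

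First I would verify the norm axioms for $|||\cdot|||_{L^{s,t;\,\kappa}(\Omega)}$. Absolute homogeneity is immediate: one pulls $|\alpha|$ out of each inner $|||\cdot|||_{L^{s,t}(B_\rho(x)\cap\Omega)}$ by Lemma~\ref{lem:equi} and then out of the supremum. For subadditivity, Lemma~\ref{lem:equi} gives
\[
|||f+g|||_{L^{s,t}(B_\rho(x)\cap\Omega)} \le |||f|||_{L^{s,t}(B_\rho(x)\cap\Omega)} + |||g|||_{L^{s,t}(B_\rho(x)\cap\Omega)}
\]
for every admissible ball; multiplying by $\rho^{(\kappa-n)/s}$ and using $\sup(a+b) \le \sup a + \sup b$ over the common index set $\{(\rho, x) : 0 < \rho \le \mathrm{diam}(\Omega),\, x \in \Omega\}$ yields the triangle inequality. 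Definiteness follows because if $|||f|||_{L^{s,t;\,\kappa}(\Omega)} = 0$, then $|||f|||_{L^{s,t}(B_\rho(x)\cap\Omega)} = 0$ on every such ball, whence by Lemma~\ref{lem:equi} also $\|f\|_{L^{s,t}(B_\rho(x)\cap\Omega)} = 0$, so $f = 0$ almost everywhere on $\Omega$ after a standard covering argument.

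For the two-sided estimate, I would apply Lemma~\ref{lem:equi} on each set $B_\rho(x) \cap \Omega$ to obtain
\[
\|f\|_{L^{s,t}(B_\rho(x)\cap\Omega)} \le |||f|||_{L^{s,t}(B_\rho(x)\cap\Omega)} \le \frac{s}{s-1}\|f\|_{L^{s,t}(B_\rho(x)\cap\Omega)},
\]
multiply throughout by the positive weight $\rho^{(\kappa-n)/s}$, and take the supremum over admissible $(\rho, x)$ in all three expressions simultaneously to land at the claimed chain. There is no substantive obstacle; the corollary is essentially a packaging statement. The only point worth flagging explicitly is that Lemma~\ref{lem:equi} applies verbatim to any bounded measurable subset because the rearrangements $f^{*}$ and $f^{**}$ depend only on the ambient measure space, and this is what licenses the pointwise-in-$(\rho, x)$ application used throughout.
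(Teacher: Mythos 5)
Your proposal is correct and follows essentially the same route as the paper, which simply observes that the corollary is obtained directly from the definition \eqref{eq:normLM} and Lemma~\ref{lem:equi} by applying the two-sided bound on each set $B_\rho(x)\cap\Omega$ and taking the supremum over the weights $\rho^{(\kappa-n)/s}$. Your additional verification of the norm axioms and the remark that the rearrangement-based Lemma~\ref{lem:equi} applies on any measurable subset are just explicit elaborations of the same argument.
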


\section{Proof of main theorem}
 In this section, we give the detail proof of Theorem~\ref{theo:main}. The main idea of our proof is based on applying Schauder Fixed Point Theorem (see \cite{Trudinger}) for a continuous mapping $T: \ V_{\varepsilon} \to V_{\varepsilon}$, where $V_{\varepsilon}$ is closed, convex and $T(V_{\varepsilon})$ is precompact under the strong topology of $W_0^{1,1}(\Omega)$. The proof is divided into four steps under all hypotheses of Theorem~\ref{theo:main}.\\

\noindent 
\begin{proof}[{\bf Proof of Theorem \ref{theo:main}}] Let $q,\, s,\, t$ satisfying~\eqref{eq:cond_q}, \eqref{eq:cond_s} and set $\theta = \frac{q}{q-p+1}$ as in Theorem \ref{theo:main}. 
For every $\varepsilon>0$, we consider the set $V_{\varepsilon}$ as follows
\begin{align}
\label{eq:setV}
V_{\varepsilon}=\left\{u\in W^{1,1}_0(\Omega): \ |||\nabla u|||_{L^{qs,qt;\,\theta s}(\Omega)}\leq \varepsilon \right\}.
\end{align}
We introduce the mapping $T$ as 
\begin{equation}
\label{eq:T}
T: \ V_{\varepsilon} \to V_{\varepsilon} \ \mbox{ defined by } \ T(v) = u, \ \mbox{ for any } \ v\in V_{\varepsilon},
\end{equation}
where $u$ is the unique renormalized solution to the following equation 
\begin{equation}
\label{eq:elliptictype11eeaaae1}
\begin{cases}
-\mbox{div}(A(x,\nabla u)) &=\ |\nabla v|^q+ \mu \quad \text{in} \ \ \Omega, \\
\hspace{2cm} u &= \ 0 \quad \text{on} \ \ \partial \Omega.
\end{cases}
\end{equation}
{\bf{First step:}} {\it $V_{\varepsilon}$ is closed and convex under the strong topology of $W_0^{1,1}(\Omega)$.} 
 
We first prove that $V_{\varepsilon}$ is convex. Indeed, for any $u, v \in V_{\varepsilon}$ and $\eta \in [0,1]$, we must to show that $w = \eta u+(1-\eta)v\in V_{\varepsilon}$. We remark that $|||\cdot|||_{L^{s,t}(\mathcal{O})}$ is a norm in Lorentz-Morrey space $L^{s,t}(\mathcal{O})$, for any subset $\mathcal{O}$ of $\Omega$. Therefore, for any $z \in \Omega$ and $ 0 < \rho \le \mbox{diam}(\Omega)$, we have
\begin{align*}
|||\nabla w|||_{L^{s,t}(B_{\rho}(z)\cap \Omega)}  \ \le \ \eta  |||\nabla u|||_{L^{s,t}(B_{\rho}(z)\cap \Omega)} + (1-\eta)  |||\nabla v|||_{L^{s,t}(B_{\rho}(z)\cap \Omega)}.
\end{align*}
Multiply two sides of this inequality by $\rho^{\frac{\kappa-n}{s}}$, we obtain 
\begin{align*}
 \rho^{\frac{\kappa-n}{s}}|||\nabla w|||_{L^{s,t}(B_{\rho}(z)\cap \Omega)}  \ \le \ \eta \rho^{\frac{\kappa-n}{s}} |||\nabla u|||&_{L^{s,t}(B_{\rho}(z)\cap \Omega)} \\ 
 &+ (1-\eta) \rho^{\frac{\kappa-n}{s}} |||\nabla v|||_{L^{s,t}(B_{\rho}(z)\cap \Omega)},
\end{align*}
which deduces that
$$
 |||\nabla w|||_{L^{s,t;\,\kappa}(\Omega)}  \ \le \ \eta  |||\nabla u|||_{L^{s,t;\,\kappa}(\Omega)} + (1-\eta)  |||\nabla v|||_{L^{s,t;\,\kappa}(\Omega)}   \le \varepsilon,
$$
which gives $w \in V_{\varepsilon}$. 

Next we show that $V_{\varepsilon}$ is closed under the strong topology of $W^{1,1}_0(\Omega)$. Let $\{u_k\}_{k \in \mathbb{N}}$ be a sequence in $V_{\varepsilon}$ such that $u_k$ converges strongly in $W^{1,1}_0(\Omega)$ to a function $u$. Let $z \in \Omega$ and $ 0 < \rho \le \mbox{diam}(\Omega)$, we note that $\nabla u_k$ converges to $\nabla u$ almost everywhere in $ B_{\rho}(z)\cap \Omega$.  By~\cite[Proposition 1.4.9]{55Gra}, it follows that the sequence $(\nabla u_k)^*$ converges to $(\nabla u)^*$ in $[0,\infty)$. For any $\lambda>0$, by the Fatou lemma,  we obtain that
\begin{equation*}
\displaystyle \frac{1}{\lambda} \int_0^{\lambda} (\nabla u)^*(\eta)d\eta \le \limsup_{k\to \infty} \frac{1}{\lambda} \int_0^{\lambda} (\nabla u_k)^*(\eta)d\eta,
\end{equation*}
which asserts that
\begin{equation*}
(\nabla u)^{**}(\lambda) \le \limsup_{k\to \infty} (\nabla u_k)^{**}(\lambda).
\end{equation*}
 We thus get
\begin{align*}
{\rho^{\frac{\kappa-n}{s}}}|||\nabla u|||_{L^{s,t}(B_\rho(z)\cap\Omega)} & \le  \limsup_{k\to \infty} {\rho^{\frac{\kappa-n}{s}}} |||\nabla u_k|||_{L^{s,t}(B_\rho(z)\cap\Omega)} \\
& \le |||\nabla u_k|||_{L^{s,t;,\kappa}(\Omega)} \le \varepsilon.
\end{align*}
It follows that 
$$ |||\nabla u|||_{L^{s,t;\kappa}(\Omega)}=\sup_{0<\rho\le \mbox{diam}(\Omega), \, z \in \Omega} {\rho^{\frac{\kappa-n}{s}}}|||\nabla u|||_{L^{s,t}(B_\rho(z)\cap\Omega)} \le \varepsilon,$$
which leads to $u \in V_{\varepsilon}$.

\noindent
{\bf{Second step:}} {\it There exist $\delta_0>0$ and $\varepsilon_0>0$ such that if
$\|\mu\|_{L^{s,t;\,\theta s}(\Omega)}\leq \delta_0$ then the mapping $T: V_{\varepsilon_0}\to V_{\varepsilon_0}$ in~\eqref{eq:T} is well-defined.}

Under the hypotheses~\eqref{eq:cond_q} and~\eqref{eq:cond_s}, by Corollary \ref{coro:P}, there exists a positive constant $C$ such that for any renormalized solution $u$ to equation~\eqref{eq:elliptictype}, there holds
\begin{align}
\label{eq:C}
\|\nabla u\|^{p-1}_{L^{qs,qt;\,\theta s}(\Omega)} \le C\|\mu\|_{L^{s,t;\,\theta s}(\Omega)}.
\end{align}
We first prove that there exists $\delta_0>0$ such that if $ \|\mu\|_{L^{s,t;\,\theta s}(\Omega)}\leq \delta_0 $
then there exists a positive number $y_0$ satisfying
\begin{equation}
\label{eq:y0}
\frac{Cs}{s-1} \left(\frac{qs}{qs-1}\right)^{p-1}\left(y_0+\|\mu\|_{L^{s,t;\,\kappa}(\Omega)}\right) = y_0^{\frac{p-1}{q}}. 
\end{equation}
 We consider the function $g: \ [0,\infty) \to \mathbb{R}$ defined by
\begin{equation}
\label{eq:f}
g(y) = (cy+ca)^{\frac{\theta}{\theta-1}} - y,
\end{equation}
with  $c = \displaystyle \frac{Cs}{s-1} \left(\frac{qs}{qs-1}\right)^{p-1}$ and $a = \|\mu\|_{L^{s,t;\,\theta s}(\Omega)}$. Noting that $\theta>1$, let us choose 
$$\delta_0 = \frac{1}{c\theta}\left(\frac{\theta-1}{c\theta}\right)^{\theta-1} >0.$$ 
If $a \le \delta_0$ then the function $g$ given by~\eqref{eq:f} satisfies $g(0)>0$ and $\displaystyle \lim_{y\to \infty} g(y) = \infty$. Moreover, $g'(y) = \frac{\theta c}{\theta-1}(cy+ca)^{\frac{1}{\theta-1}}-1$, thus $g'(y) = 0$ if and only if $y = y^*$ given by
$$ y^* = \frac{1}{c} \left(\frac{\theta-1}{c\theta }\right)^{\theta-1} - a =  {\theta\delta_0-a}>0. $$
It follows that the minimum value of $g$ on $[0,\infty)$ is 
$$g(y^*) = (cy^* + ca)\frac{\theta-1}{c\theta } - y^* = {a - \delta_0}  \le 0.$$
For this reason, we conclude that $g$ has exactly one root $y_0 \in (0,y^*]$ which satisfies~\eqref{eq:y0}.

Let us set $ \varepsilon_0 = y_0^{\frac{1}{q}}$. By the definition of $T$, for any $v \in V_{\varepsilon_0}$, $u = T(v) \in W_0^{1,1}(\Omega)$ is the unique renormalized solution to equation~\eqref{eq:elliptictype11eeaaae1} (see \cite{11DMOP} for the uniqueness of renormalized solution to~\eqref{eq:elliptictype11eeaaae1}). Applying~\eqref{eq:C} and Corollary~\ref{coro:equi}, we obtain
\begin{align}
\label{eq:E3}
\|\nabla u\|^{p-1}_{L^{qs,qt;\,\theta s}(\Omega)} \le C  \| |\nabla v|^q+\mu\|_{L^{s,t;\,\theta s}(\Omega)} \le C  ||| |\nabla v|^q+\mu|||_{L^{s,t;\,\theta s}(\Omega)}.
\end{align}
Combining~\eqref{eq:E3} with the triangle inequality and Corollary~\ref{coro:equi}, one has
\begin{align}
\nonumber
|||\nabla u|||^{p-1}_{L^{qs,qt;\,\theta s}(\Omega)} & \le \left(\frac{qs}{qs-1}\right)^{p-1}\|\nabla u\|^{p-1}_{L^{qs,qt;\,\theta s}(\Omega)} \\
\nonumber
& \le C\left(\frac{qs}{qs-1}\right)^{p-1}\left[|||(|\nabla v|^q)|||_{L^{s,t;\,\theta s}(\Omega)} + |||\mu|||_{L^{s,t;\,\theta s}(\Omega)}  \right] \\ 
\nonumber
& \le  \frac{Cs}{s-1} \left(\frac{qs}{qs-1}\right)^{p-1} \left[\|\nabla v\|^q_{L^{qs,qt;\,\theta s}(\Omega)} + \|\mu\|_{L^{s,t;\,\theta s}(\Omega)}  \right] \\ 
\label{eq:E4}
& \le \frac{Cs}{s-1} \left(\frac{qs}{qs-1}\right)^{p-1} \left[|||\nabla v|||^q_{L^{qs,qt;\,\theta s}(\Omega)} + \|\mu\|_{L^{s,t;\,\theta s}(\Omega)}  \right].
\end{align}
Here we note that $|||\nabla v|||^q_{L^{qs,qt;\,\theta s}(\Omega)} \le y_0$, with $y_0$ is the root of~\eqref{eq:y0} and $\varepsilon_0 = y_0^{\frac{1}{q}}$. Therefore, we can rewrite~\eqref{eq:E4} as
\begin{align*}
|||\nabla u|||^{p-1}_{L^{qs,qt;\,\theta s}(\Omega)} \le  y_0^{\frac{p-1}{q}}  = \varepsilon_0^{p-1},
\end{align*}
which yields $T(v) = u \in V_{\varepsilon_0}$. We conclude that the mapping $T$ is well-defined.

\noindent
{\bf{Third step:}} {\it $T:V_{\varepsilon_0}\to V_{\varepsilon_0}$ is continuous, and $\overline{T(V_{\varepsilon_0})}$ is a compact set under the strong topology of $W^{1,1}_0(\Omega)$.}

Let $\{v_k\}_{k \in \mathbb{N}}$ be a sequence in $V_{\varepsilon_0}$ such that $v_k$ converges strongly in $W^{1,1}_0(\Omega)$ to a function $v \in V_{\varepsilon_0}$. For every $k \in \mathbb{N}$, $u_k = T(v_k)$ is the renormalized solution of the equation
\begin{align}
\label{eq:uk}
\begin{cases} -\mbox{div}(A(x,\nabla u_k)) & = \ |\nabla v_k|^q + \mu \quad \mbox{in} \ \Omega, \\ \hspace{2cm} u_k & = \ 0 \quad \mbox{on} \ \partial \Omega,\end{cases} 
\end{align}
with
\begin{equation}
\label{eq:nablavk}
\|\nabla v_k\|_{L^{qs,qt;\,\theta s}(\Omega)} \le \varepsilon_0.
\end{equation}
We obtain that 
\begin{equation}
\label{eq:nablavk2}
\|\nabla v_k\|_{L^{r}(\Omega)} \le \varepsilon_0,
\end{equation}
for any $q<r<qs$. Hence, there exists a subsequence $\{v_{k_j}\}_{j\in \mathbb{N}}$ of $\{v_k\}$ such that $\nabla v_{k_j}$ converges to $\nabla v$ almost everywhere in $\Omega$. By \eqref{eq:nablavk2} and Vitali Convergence Theorem we have $\nabla v_{k_j}$ converges to $\nabla v$ strongly in $L^q(\Omega)$. This follows that  $\nabla v_{k}$ converges to $\nabla v$ strongly in $L^q(\Omega)$. 

By the stability result of renormalized solution in~\cite[Theorem 3.4]{11DMOP}, there exists a subsequence $\{u_{k_j}\}$ such that $\{u_{k_j}\}$ converges to $u$ almost everywhere in $\Omega$, where $u$ is the unique renormalized solution  of the following equation
\begin{align*}
\begin{cases} -\mbox{div}(A(x,\nabla u)) & = \ |\nabla v|^q + \mu \quad \mbox{in} \ \Omega, \\ \hspace{2cm} u & = \ 0 \quad \mbox{on} \ \partial \Omega.\end{cases} 
\end{align*}
Moreover, $\nabla u_{k_j}$ also converges to $\nabla u$ almost everywhere in $\Omega$. It is similar to the above, using again Vitali Convergence Theorem with the facts that $qs>1$ and
$$\|\nabla u_{k_j}\|_{L^{qs,qt;\,\theta s}(\Omega)} \le \varepsilon_0,$$ we deduce that $u_k$ converges strongly to $u$ in $W_0^{1,1}(\Omega)$. It follows that $T$ is continuous.

The compactness of the set $\overline{T(V_{\varepsilon_0})}$ under the strong topology of $W^{1,1}_0(\Omega)$ can be proved by the same method as in the above. Indeed, let $\{u_k\} = \{T(v_k)\}$ be a sequence in $T(V_{\varepsilon_0})$ where $\{v_k\} \subset V_{\varepsilon_0}$, then we have \eqref{eq:uk}, \eqref{eq:nablavk}. Applying~\cite[Theorem 3.4]{11DMOP} again, there exist a subsequence $\{u_{k_j}\}$ and a function $u \in W_0^{1,1}(\Omega)$ such that $\nabla{u_{k_j}} \to \nabla u$ almost everywhere in $\Omega$. Finally, using Vitali Convergence Theorem again, we obtain that $\{u_{k_j}\}$ strongly converges to $u$ in $ W_0^{1,1}(\Omega)$.

\noindent
{\bf{Fourth step:}} {\it Applying Schauder Fixed Point Theorem.}

By Schauder Fixed Point Theorem, the mapping $T: \ V_{\varepsilon_0} \to V_{\varepsilon_0}$ has a fixed point $u$ in $V_{\varepsilon_0}$. This gives a solution $u$ to equation \eqref{eq:Riccati}. Moreover, in the proof of the second step, we obtain the following estimation
\begin{align*}
\|\nabla u\|^q_{L^{qs,qt;\,\theta s}(\Omega)} \le |||\nabla u|||^q_{L^{qs,qt;\,\theta s}(\Omega)} \le y^* \le  \theta\delta_0 - \|\mu\|_{L^{s,t;\,\theta s}(\Omega)}. 
\end{align*} 
The proof of Theorem~\ref{theo:main} is complete.
\end{proof}

\section*{Acknowledgments}
The second author was supported by Ho Chi Minh City University of Education.

\end{document}